\newcommand\marginal[1]{\marginpar{\raggedright\parindent=0pt\tiny #1}}
\newtheorem{theorem}{Theorem}[section]
\newtheorem{prop}[theorem]{Proposition}
\newtheorem{lemma}[theorem]{Lemma}
\newtheorem{claim}[theorem]{Claim}
\theoremstyle{definition}
\newtheorem*{remark}{Remark}
\newcommand{\E}{{\mathbb E}}
\newcommand {\PP}{{\mathbb P}}
\newcommand{\sss}{\scriptscriptstyle}
\newcommand{\cSi}{\mathcal{S}^{\sss(i)}}
\newcommand{\cBi}{\mathcal{B}^{\sss(i)}}
\newcommand{\cBe}{\mathcal{B}^{\sss(1)}}
\newcommand{\cBt}{\mathcal{B}^{\sss(2)}}
\newcommand{\cSe}{\mathcal{S}^{\sss(1)}}
\newcommand{\cSt}{\mathcal{S}^{\sss(2)}}
\newcommand{\cBhi}{\widehat{\mathcal{B}}^{\sss(i)}}
\newcommand{\cBhe}{\widehat{\mathcal{B}}^{\sss(1)}}
\newcommand{\cBht}{\widehat{\mathcal{B}}^{\sss(2)}}
\newcommand{\Si}{S^{\sss (i)}}
\newcommand{\Se}{S^{\sss (1)}}
\newcommand{\cNh}{\widehat{\mathcal{N}}}
\numberwithin{equation}{section}
\newcommand{\sumk}{\sum_{k=1}^\infty}
\newcommand{\sumin}{\sum_{i=1}^n}
\newcommand\set[1]{\ensuremath{\{#1\}}}
\newcommand\bigset[1]{\ensuremath{\bigl\{#1\bigr\}}}
\newcommand\bigpar[1]{\bigl(#1\bigr)}
\newcommand\Bigpar[1]{\Bigl(#1\Bigr)}
\newcommand\bigabs[1]{\bigl|#1\bigr|}
\def\rompar(#1){\textup(#1\textup)}    
\def\xexp(#1){e^{#1}}
\newcommand\floor[1]{\lfloor#1\rfloor}
\newcommand\ntoo{\ensuremath{{n\to\infty}}}
\newcommand\ktoo{\ensuremath{{k\to\infty}}}
\newcommand\ttoo{\ensuremath{{t\to\infty}}}
\newcommand\punkt{.\spacefactor=1000}    
\newcommand\whp{w.h.p\punkt}
\newcommand\whpx{w.h.p}
\newcommand{\tend}{\longrightarrow}
\newcommand\dto{\overset{\mathrm{d}}{\tend}}
\newcommand\pto{\overset{\mathrm{p}}{\tend}}
\newcommand\asto{\overset{\mathrm{a.s.}}{\tend}}
\newcommand\bbN{\mathbb N}
\renewcommand\P{\operatorname{\mathbb P{}}}
\newcommand\gD{\Delta}
\newcommand\gl{\lambda}
\newcommand\gs{\sigma}
\newcommand\eps{\varepsilon}
\renewcommand\phi{\xxx}  
\newcommand\cF{\mathcal F}
\newcommand\cM{\mathcal M}
\newcommand\cN{\mathcal N}
\newcommand\cP{\mathcal P}
\newcommand\cS{{\mathcal S}}
\newcommand\cV{\mathcal V}
\newcommand\tM{\tilde M}
\newcommand\qw{^{-1}}
\newcommand\qqq{^{1/3}}
\newcommand{\refL}[1]{Lemma~\ref{#1}}
\newcommand\REM[1]{{\raggedright\texttt{[#1]}\par\marginal{XXX}}}
\newcommand\cFx{\cF^+}
\newcommand\kk{^{(k)}}
\newcommand\bigmid{\,\big|\,}
\begin{document}

\title{Competing first passage percolation on random graphs with finite variance degrees\thanks{The authors are grateful to Olle Nerman for pointing out some relevant references. This work was in part supported by the Swedish Research Council (grants 237-2013-7302 and 2016-04442 DA, MD) and by the Knut and Alice Wallenberg Foundation (SJ).}}\parskip=5pt plus1pt minus1pt \parindent=0pt

\author{Daniel Ahlberg\thanks{Department of Mathematics, Stockholm University; {\tt daniel.ahlberg@math.su.se}}  \and Maria Deijfen\thanks{Department of Mathematics, Stockholm University; {\tt mia@math.su.se}} \and Svante Janson\thanks{Department of Mathematics, Uppsala University; {\tt svante.janson@math.uu.se}}  }

\date{8 November 2017}

\maketitle

\begin{abstract}
\noindent We study the growth of two competing infection types on graphs generated by the configuration model with a given degree sequence. Starting from two vertices chosen uniformly at random, the infection types spread via the edges in the graph in that an uninfected vertex becomes type 1 (2) infected at rate $\lambda_1$ ($\lambda_2$) times the number of nearest neighbors of type 1 (2).
Assuming (essentially) that the degree of a randomly chosen vertex has finite second moment, we show that if $\lambda_1=\lambda_2$, then the fraction of
vertices that are ultimately infected by type 1 converges to a continuous random variable $V\in(0,1)$, as the number of vertices tends to
infinity. Both infection types hence occupy a positive (random) fraction of the vertices. If $\lambda_1\neq \lambda_2$, on the other hand, then the type with the larger intensity
occupies all but a vanishing fraction of the vertices. Our results apply also to a uniformly chosen simple graph with the given degree sequence. 

\noindent
\vspace{0.3cm}

\noindent \emph{Keywords:} Random graphs, configuration model, first passage percolation, competing growth, coexistence, continuous-time branching process.

\vspace{0.2cm}

\noindent MSC 2010 classification: 60K35, 05C80, 90B15.

\end{abstract}

\section{Introduction}

Fix $n\geq 1$ and let $(d_1,\ldots,d_n)$ be a sequence of positive integers
that may depend on $n$. Consider a graph with $n$ vertices and degrees
$(d_1,\ldots,d_n)$ generated by the configuration model, that is, equip each
vertex $i\in\{1,\ldots,n\}$ with $d_i$ half-edges, and pair half-edges
uniformly at random to create edges. For all half-edges to find a partner we
assume that the total degree $\sum d_i$ is even. Assign independently to each edge $e$ in the resulting graph two independent
exponentially distributed passage times $X_1(e)$ and $X_2(e)$ with parameter
$\lambda_1$ and $\lambda_2$, respectively. At time 0, two uniformly chosen
vertices are infected with infections type 1 and type 2, respectively, and
the infections then spread via nearest neighbors: When a vertex becomes type
1 (2) infected, the time that it takes for the infection to traverse an edge
$e$ emanating from the vertex is given by $X_1(e)$ ($X_2(e)$). If the other
end point of the edge $e$ is still uninfected at that time, it becomes type
1 (2) infected and remains so forever. It also becomes immune to the other
infection type.

In this paper we study the above competing growth process on a random graph
generated from a given degree sequence subject to the regularity conditions
stated below. These conditions ensure that the graph contains a giant component occupying all but a vanishing fraction of the vertices as $n\to\infty$, and hence that almost all vertices will \whp{} be infected when the process terminates. The question that we will be interested in is the outcome of this competition. Specifically, will both types occupy a strictly positive fraction of the vertices in the limit as
$n\to\infty$? We show that the answer is yes if and only if
$\lambda_1=\lambda_2$. This question has previously been studied for the
configuration model with constant degrees~\cite{regular} and infinite
variance degrees~\cite{winner}; see the end of this section for a summary of
earlier work.

Given a degree sequence $(d_1^{\sss(n)},d_2^{\sss(n)},\ldots,d_n^{\sss(n)})$ with $\sum d_i^{\sss(n)}$ even, write $D_n$ for the degree of a vertex chosen uniformly at random, so that
$$
\P(D_n=k)=\#\{i:d_i=k\}/n.
$$
Our assumptions on the (sequence of) degree sequences are the following: 
\begin{itemize}
\item[(A1)] $(D_n)_{n\ge1}$ converges in distribution to a random variable $D$ with $\E[D^2]<\infty$, and
$$
\E[D_n^2]\to\E[D^2];
$$
\item[(A2)]  $d_i\geq 2$ for all $i$, and $\P(D>2)>0$.
\end{itemize}

Assumption (A1) could equivalently be formulated as the sequence of empirical distributions being uniformly square integrable and converging to a probability distribution $(p_d)_{d\in\mathbb{N}}$ on the positive integers. One standard example in which (A1) is satisfied is when $(d_1, d_2,\ldots, d_n)$ are independent realizations of a random variable $D$ with finite variance. By increasing a randomly chosen degree by 1, if necessary, we can make sure that the total degree is even. If we condition on the sequence $(D_i)_{i=1}^n$ and assume that $\E[D^2]<\infty$, $\P(D\ge2)=1$ and $\P(D>2)>0$, then (A1) and (A2) hold \whp{} and thus our results, as stated below, apply.

A graph generated by the configuration model may contain self-loops and multiple edges, but the assumption (A1) implies that the probability of obtaining a simple graph is bounded away from 0 as $n\to\infty$; see \cite{AngHofHol16,simpleI,simpleII}. Furthermore, it is well-known that conditioning on the resulting graph being simple yields a uniform sample among simple graphs with the specified degree sequence; see \cite[Chapter 7]{Remco_book}. Hence our results apply also for such a uniformly chosen simple graph.

Let $D^*$ be a size biased version of $D$, that is,
$\P(D^*=d)=d\P(D=d)/\E[D]$. The threshold for the occurrence of a (unique)
giant component in the graph is given by $\E[D^*-1]=1$; see
\cite{SvanteMalwina,MR-95}. This can be seen by exploring the components in
the graph via nearest neighbors, starting from a uniformly chosen vertex. As
$n\to\infty$, this exploration can be approximated by a branching process
and, by construction of the graph, the offspring distribution of explored
vertices in the second and later generations is given by $D^*-1$. The
relative size of the giant component is given by the survival probability in
the approximating branching process; see \cite{SvanteMalwina,MR-98}. Condition
(A2) above implies that the survival probability is 1, so that the
asymptotic fraction of vertices in the giant component is 1.

Now consider the competition process described above. Write $N_i(n)$ for the total number of type $i$ infected vertices when the process terminates, and $\bar{N}_i(n)=N_i(n)/n$ for the corresponding fraction. Note that, since the giant component spans all but a vanishing fraction of the vertices, we have that $\bar{N}_1(n)+\bar{N}_2(n)\pto 1$, and it is therefore enough to consider $\bar{N}_1(n)$. Furthermore, by symmetry, we may assume that $\lambda_1\leq \lambda_2$. The following is our main result.

\begin{theorem}\label{th:main} Assume that the degree sequence satisfies (A1) and (A2).
\begin{itemize}
\item[{\rm{(a)}}] If $\lambda_1=\lambda_2$, then $\bar{N}_1(n)\dto V$, where $V$ is a continuous random variable with a strictly positive density on $(0,1)$.
\item[{\rm{(b)}}] If $\lambda_1<\lambda_2$, then $\bar{N}_1(n)\dto 0$.
\end{itemize}
\end{theorem}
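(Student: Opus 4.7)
The plan is to couple the two competing infections, in their early stages, with two independent continuous-time branching processes (CTBPs) and then translate the known asymptotics of such branching processes into conclusions about the competition. Starting from a uniformly chosen seed, exploring the cluster reveals, one at a time, previously unseen vertices whose forward degrees are asymptotically i.i.d.\ with the size-biased law $D^{*}-1$; conditionally, the infection of type $i$ traverses each freshly revealed edge after an independent Exp$(\lambda_i)$ waiting time. Hence, as long as the two clusters together occupy $o(\sqrt n)$ vertices (so half-edge collisions are negligible), cluster $i$ evolves as a CTBP in which the root has $D$ children and every subsequent individual has $D^{*}-1$ children, each appearing at an independent Exp$(\lambda_i)$ birth time. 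Under (A2) this CTBP is supercritical with Malthusian parameter $\alpha_i$ solving $\lambda_i\,\E[D^{*}-1]/(\lambda_i+\alpha_i)=1$, that is $\alpha_i=\lambda_i(\E[D^{*}]-2)$, and the Nerman/Kesten--Stigum theorem gives $N_i(t)e^{-\alpha_i t}\to W_i$ a.s., with $W_i>0$ a.s.\ since (A2) forces $D\ge 2$ and no individual dies childless.

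For part (b), $\lambda_1<\lambda_2$ gives $\alpha_1<\alpha_2$. Run both clusters until the first time $T_n$ at which the type-2 cluster reaches a small fixed fraction $\delta n$ of the vertices; then $T_n\approx\alpha_2^{-1}\log(\delta n/W_2)$, so $N_1(T_n)\approx W_1 e^{\alpha_1 T_n}$ is of order $n^{\alpha_1/\alpha_2}=o(n)$. From time $T_n$ onwards, an argument based on dominating boundary sizes (the type-2 boundary dwarfs the type-1 boundary in every subsequent infinitesimal step) shows that type 2 absorbs all but a vanishing fraction of the remaining vertices, yielding $\bar N_1(n)\pto 0$.

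For part (a), with $\lambda_1=\lambda_2=\lambda$ the two CTBPs are i.i.d.\ and grow at the same exponential rate $e^{\alpha t}$, so $N_1(t)/(N_1(t)+N_2(t))\to W_1/(W_1+W_2)$ a.s. The strategy is to run both clusters until their common scale exceeds $n^{1/2+\epsilon}$ (so the CTBP coupling is still accurate and the martingale limits $W_i$ are well-approximated), and then show that thereafter each remaining uninfected vertex is captured by the two infections in proportion to the current cluster sizes. This should follow from a second exploration starting at a uniform unexplored vertex: by the memoryless property, it meets the type-$i$ cluster first with probability asymptotic to $N_i/(N_1+N_2)$, up to negligible errors. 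One then concludes $\bar N_1(n)\dto V:=W_1/(W_1+W_2)$. Continuity and strict positivity of the density of $V$ on $(0,1)$ reduce to the corresponding property of $W$ on $(0,\infty)$: the martingale limit satisfies the smoothing fixed-point equation $W\eqd\sum_{j=1}^{K}e^{-\lambda\tau_j}W^{(j)}$ with $K=D^{*}-1\ge 1$ nondegenerate and the $W^{(j)}$ i.i.d.\ copies of $W$, from which standard smoothing-transform results give absolute continuity of $W$ and strict positivity of its density on $(0,\infty)$.

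The main obstacles I foresee are three: (i) making the coupling between the graph exploration and the CTBP quantitatively accurate up to cluster sizes well above $\sqrt n$, so that half-edge re-pairings and inter-cluster collisions are both controlled; (ii) the ``fill-in'' step that turns the ratio of CTBP sizes into the limit of the final fraction $\bar N_1(n)$, which requires uniform control over the order in which remaining vertices are claimed and a careful accounting of the two boundaries; and (iii) establishing absolute continuity and strict positivity of the density of $W$, which typically requires a separate analysis of the smoothing equation via Fourier methods or Liu-type arguments.
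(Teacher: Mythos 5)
Your early-phase analysis is essentially the paper's: the exploration is coupled with two independent branching processes with offspring law $D^*-1$, and the limit $V$ is identified as $W_1/(W_1+W_2)$ (the paper uses the Seneta--Heyde norming and quotes Athreya--Ney for the continuous, strictly positive density of $W_i$, so no smoothing-transform analysis is needed; your Kesten--Stigum step is also fine since $\E[D^2]<\infty$ gives the $X\log X$ condition). The genuine gaps are in the middle and late phases, which is where almost all of the work in the paper lies. First, the branching-process coupling cannot be ``run until the common scale exceeds $n^{1/2+\epsilon}$'', let alone until a cluster has $\delta n$ vertices: once order $\sqrt n$ half-edges have been touched, collisions are no longer negligible, and at linear scales depletion destroys the Malthusian growth $N_i(t)\approx W_ie^{\alpha_i t}$ on which your time estimate $T_n\approx\alpha_2^{-1}\log(\delta n/W_2)$ rests. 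The paper only uses the coupling up to a slowly growing time $t_n$ (so that whp at most $n^{1/3}$ pairings have occurred), and handles everything after that by a different mechanism. Second, your ``fill-in'' step is precisely the statement that needs proof, not a consequence of the memoryless property: a newly claimed vertex joins type $i$ with probability proportional to the number of \emph{active type-$i$ half-edges} (the boundary), not to the cluster sizes $N_i$, and the whole difficulty is to show that this boundary fraction does not drift over the $\Theta(n)$ steps of the bulk phase. The paper proves this by showing that, when $\lambda_1=\lambda_2$, the fraction $M_k$ of active type-1 half-edges is a martingale, bounding its quadratic variation on $[\nu_n,(1-\eps)N]$ (which in turn needs the separate estimate $S_k\ge ck$ whp, proved via a claim that a positive proportion of degree-$\ge3$ vertices remain uninfected up to the relevant time), and applying Doob's inequality. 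Nothing in your proposal substitutes for this argument; ``it meets the type-$i$ cluster first with probability asymptotic to $N_i/(N_1+N_2)$'' assumes the stability you are supposed to establish.

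For part (b) there is a further gap: even granting that $N_1$ is only polynomially large when type 2 has $\delta n$ vertices, the assertion that ``type 2 absorbs all but a vanishing fraction of the remaining vertices'' is not automatic --- type 1 still has a growing active boundary and a positive rate, and ruling out that it later captures a positive fraction is a real claim. The paper avoids this entirely with a monotone coupling: after time $t_n$ it lets type 1 spread at the faster rate $\lambda_2$ as well; the modified process dominates the original for type 1, has equal rates after $t_n$, and therefore falls under the equal-rate martingale machinery with initial fraction $\check{\cM}_{t_n}\pto 0$ (by the unequal-rate branching comparison), giving $\bar N_1(n)\pto0$. Some device of this kind (or a quantitative late-stage argument you have not supplied) is needed to close part (b).
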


\begin{remark}
Starting with two given infected vertices, e.g.\ vertices 1 and 2, or
several infected vertices of each type (fixed in number as $n\to\infty$)
gives the same results, except that the distribution of the limiting
fraction $V$ will depend on the degrees of the initially infected
vertices. Moreover, the theorem extends to a fixed number of competing types
larger than two, in which case all types of maximal strength each conquer a
positive fraction of the vertices.
\end{remark}

\begin{remark}
The assumption $d_i\geq 2$ ensures that the giant component
comprises almost all vertices. Weakening this condition to $\E[D^*-1]>1$
gives a graph where the giant component may contain a smaller fraction of
the vertices. The competition process can be analyzed also on such a graph
and the non-trivial case then arises when both initial vertices belong to
the giant component. We believe that our methods apply also in this case,
but it would require dealing with a conditioning on both initial vertices
being in the giant component. Establishing a  version of Theorem
\ref{th:main} in that case would make it applicable also for e.g.\ the
Erd\H{o}s--Renyi graph and the generalized random graph analyzed in
\cite{BDM-L}. These models give simple graphs with random degrees and,
conditionally on the degrees, the graph is uniform on the set of all simple
graphs with those given degrees.
\end{remark}

\textbf{Outline of the proof}

In the proof below we establish that there is an initial phase where the
outcome of the competition is determined, followed by a phase that lasts until
close to the end, and where the fractions of two types are essentially
constant. An important tool in the proof is a standard technique
for exploring the graph and the evolution of the infections
simultaneously. A vertex is detected when it is reached by the infection and
the half-edges attached to the vertex are then declared active, of type 1 or
2 depending on the type of the vertex. A half-edge remains active until it
is opened for infection. A partner half-edge is then chosen and, if the
vertex of this half-edge is still uninfected at that time, this leads to
infection transfer and activation of new half-edges. The process can be
defined in continuous time or in discrete steps by observing it only at the
time points when an edge is opened; see Section 2 for a more detailed
description. Write $S^{\sss (i)}_k$ for the number of active type $i$
half-edges after $k$ steps in this process and $S_k=S^{\sss (1)}_k+S^{\sss (2)}_k$ for the total number active half-edges. Define $M_k$ to be the fraction of active type 1 half-edges among all active half-edges, 
more precisely defined by
\begin{equation}\label{eq:Mk}
M_k:=\begin{cases}
\frac{S^{\sss (1)}_k}{S_k} & \text{if } S_k>0;\\
M_{k-1} & \text{if }S_k=0.
  \end{cases}
\end{equation}

In a key step we show that, if $\lambda_1=\lambda_2$, then $M_k$ is a martingale. We then give an estimate of its quadratic variation which implies that $M_k$ is essentially constant for $k\geq \nu_n$ for any sequence of integers $\nu_n\to\infty$. The probability that a newly infected vertex is infected by type 1 is hence roughly constant for $k\geq \nu_n$ and equal to $M_{\nu_n}$. The initial stages of the
competition, on the other hand, can be approximated by a branching process
and asymptotic results on branching processes imply that $M_{\nu_n}$
converges to a continuous random variable $V\in(0,1)$ if $\lambda_1=\lambda_2$, and to 0 if $\lambda_1<\lambda_2$. This yields Theorem \ref{th:main}(a). The proof of Theorem \ref{th:main}(b) is completed
by letting the weaker type 1 infection spread with the same larger intensity
$\lambda_2$ as the type 2 infection for $k\geq\nu_n$. The fraction of type 1
vertices among infected vertices for $k\geq \nu_n$ in such a process is
close to 0 by the above results, and the type 1 infection clearly captures
even fewer vertices in the original process.

The rest of the paper is organized so that the exploration process is described in more detail in Section 2, along with the initial branching process approximation. The results on $M_k$, specifying the evolution of the infections during the main phase, are then given in Section 3. Theorem \ref{th:main} is proved in Section 4. Finally some directions for future work are described in Section 5.\medskip

\textbf{Previous work}

Competition on the configuration model has previously been studied in the
case when the degree distribution follows a power-law with exponent
$\tau\in(2,3)$, that is, when the mean degree is finite, but the variance
infinite. In that case one of the types occupies all but a finite number of
vertices as $n\to\infty$, and both types have a positive probability of
winning, regardless of the values of the intensities; see \cite{winner}. The process has also been studied on random regular graphs generated by the
configuration model with constant degree; see \cite{regular}. Our results
generalize the results in \cite{regular} when the competition starts from
fixed initial sets. However, the results in \cite{regular} also cover the
case with growing initial sets, and give precise quantifications of the
asymptotic number of vertices of each type. 

In the present work, as well as in \cite{regular,winner}, the passage times
are assumed to be exponential. The model can of course be defined
analogously for passage times with arbitrary distributions. It has been
analyzed in \cite{fixspeedI,fixspeedII} for configuration graphs with
power-law exponent $\tau\in(2,3)$ and constant passage times, so that all
randomness comes from the underlying graph. When the types have different
speed, the faster type occupies all but a vanishing fraction of the
vertices, while when the speeds are the same, the types may or may not
occupy positive fractions depending on the specific choice of the two
initial vertices. A slightly different competition process with constant
passage times is analyzed in \cite{Cooper}, and the present competition
process is analyzed on preferential attachment graphs in \cite{prefatt}. 

Finally, we mention that competing first passage percolation with
exponential passage times has previously been studied on $\mathbb{Z}^d$. In
that setting coexistence may occur for equal strength competitors, whereas
the case of unequal strength remains to be fully resolved; see \cite{2tRich}
for a survey and references.

\section{The initial phase}

In this section we first define the exploration of the graph and the flow of infection in more detail. We then describe a branching process approximation of the number of active half-edges of the two types in the early stages of the growth. This leads to a characterization of the limiting behavior of a continuous time version of $M_k$ (defined in \eqref{eq:Mk}) at the end of the initial phase. \medskip

\textbf{The exploration process}

To describe the exploration process, fix $\lambda_1,\lambda_2>0$, possibly different. At time 0 we start with the vertices and the attached half-edges. The pairing of the half-edges however is hidden and is revealed during the process. Each half-edge is throughout the process classified as either \emph{free} or \emph{paired}, and a free half-edge is in turn labeled as \emph{active} of either type 1 or 2, or \emph{inactive}. The initial set of active type $i$ half-edges consists of the half-edges attached to the uniformly chosen initial type $i$ vertex, while all other half-edges are inactive. Since the initially infected vertices are chosen randomly, the initial numbers $a_1$ and $a_2$ of active type 1 and type 2 half-edges, respectively, are random. However, we condition on them in the sequel, and hence assume that they are given numbers. The sets of half-edges are now updated inductively in continuous time as follows, with $\cSi_t$ denoting the number of active half-edges of type $i$ at time $t$.

\begin{enumerate}
\addtolength{\leftmargini}{-5pt}%
 \renewcommand{\labelenumi}{\textup{(P\arabic{enumi})}}%
 \renewcommand{\theenumi}{\labelenumi}%

\item \label{sj1}
Each active half-edge of type $i=1,2$ infects with intensity $\gl_i$, that is, it is equipped with an exponential clock with intensity $\gl_i$, and infects when the clock rings. When a half-edge $q$ infects, it picks a partner $r$  uniformly at random from all free half-edges distinct from $q$. Let $x$ and $y$ be the vertices that $q$ and $r$, respectively, are attached to. Then $q$ and $r$ go from free to paired and form an edge $xy$.

\item \label{sjold}
If $y$ is already infected, nothing more happens. In this case, $r$ was also active (of the same type as $q$ or not), and the number of active half-edges decreases by 2.

\item \label{sjnew}
If $y$ is not infected, it becomes infected by the same type as $x$, and all
remaining half-edges at $y$ become active of this type. This means that, if $q$ has type $i$ and $y$ has degree $d_y$, then the number $\cSi_t$ of active half-edges of type $i$ increases by $d_y-2$, while the number of active half-edges of the other type does not change.
\end{enumerate}

A discrete version of the process is obtained by observing the continuous time process at the times half-edges are paired. In each step $k$ of the discrete time process, an active half-edge $q$ is
chosen at random, with probability proportional to $\gl_i$ where $i$ is its
type. The chosen half-edge infects as in \ref{sj1}--\ref{sjnew} above. In
both cases, if there are no remaining active half-edges, the infections have
stopped, but it still remains to complete the graph. We then join any two uniformly chosen half-edges, that is, we choose a uniform matching of the remaining half-edges. The number of active type $i$ half-edges after $k$ steps is denoted by $\Si_k$. Throughout, quantities related to discrete time processes will be denoted by standard roman letter, while quantities related to processes in
continuous time will be denoted by calligraphy letter. For instance $\cM_t$
denotes the continuous time version of $M_k$, defined in \eqref{eq:Mk}, that
is, $\cM_t=\cSe_t/(\cSe_t+\cSt_t)$.\medskip

\textbf{Branching process approximation} 

We now describe how the early evolution of $\cSi_t$ ($i=1,2$) can be coupled with two independent branching processes. Stronger results in this direction have been obtained in \cite{one_fpp}. However, we only need the coupling up to some time $t_n\to\infty$ (without further restrictions on $t_n$). This is fairly easy to establish and we therefore describe it here. 

Our aim is to prove the following result on the fraction of active type 1 half-edges in the initial phase.

\begin{prop}\label{prop:initial} There exists a deterministic sequence of integers $t_n\to \infty$ such that $\cM_{t_n}\dto V$ as $n\to\infty$, where $V$ is a continuous random variable with strictly positive density on $(0,1)$ if $\lambda_1=\lambda_2$ and $V\equiv 0$ if $\lambda_1<\lambda_2$.
\end{prop}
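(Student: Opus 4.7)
The plan is to couple the early exploration with two independent continuous-time Markov branching processes and then read off the limit of $\cM_{t_n}$ from classical branching asymptotics.

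First I would construct the coupling. Whenever an active half-edge fires, it picks a partner uniformly among all free half-edges; since free half-edges outnumber paired ones by $\Theta(n)$ throughout the initial phase, the partner's vertex has (approximately) the size-biased distribution $D^*$, and with high probability it is distinct from every previously activated or infected vertex so long as the total number of pairings is $o(n^{1/2})$. On this event the pair $(\cSe_t,\cSt_t)$ coincides with two independent Markov branching processes $(n_1(t), n_2(t))$ in which a type-$i$ particle waits an exponential time of rate $\lambda_i$ and is then replaced by $\xi := D^* - 1$ type-$i$ offspring. Because the proposition only asks for a deterministic time, it is enough to pick $t_n\to\infty$ slowly, say $t_n \le (4\max_i\alpha_i)\qw\log n$, so that the coupling succeeds with probability tending to $1$.

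Next I would analyse the two branching processes. Assumption (A2) gives $\xi \ge 1$ a.s., so neither process can die out, and $\E[\xi] = \E[D^*] - 1 = \E[D^2]/\E[D] - 1 > 1$, so both processes are supercritical with Malthusian parameter $\alpha_i = \lambda_i(\E[D^*]-2)$. Standard continuous-time branching theory (Athreya, Jagers, Nerman) furnishes norming constants $c_i(t)$ and non-degenerate non-negative random variables $W_i$ with $n_i(t)/c_i(t) \to W_i$ in probability; under the $L\log L$ condition $\E[\xi\log\xi] < \infty$ one may take $c_i(t) = e^{\alpha_i t}$, and otherwise Seneta-Heyde norming yields the same conclusion. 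Since the processes cannot go extinct, $W_i > 0$ a.s., and $W_i$ is a sum of $a_i$ independent copies of a per-particle limit whose distribution is absolutely continuous on $(0,\infty)$ by the classical functional-equation argument for branching limits.

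Finally I would deduce the stated convergence. When $\lambda_1 = \lambda_2$ one has $\alpha_1 = \alpha_2$ and may take $c_1(t) = c_2(t)$, so
\[
\cM_{t_n} \;=\; \frac{n_1(t_n)}{n_1(t_n) + n_2(t_n)} \;\dto\; \frac{W_1}{W_1+W_2} \;=:\; V,
\]
with $W_1, W_2$ independent; as a ratio of two independent positive random variables with continuous densities on $(0,\infty)$, $V$ has a strictly positive density on $(0,1)$. When $\lambda_1 < \lambda_2$ one has $\alpha_1 < \alpha_2$, so $c_1(t_n)/c_2(t_n)\to 0$ and $\cM_{t_n} \dto 0$. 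The main obstacle lies in the density claim: under the bare moment hypothesis $\E[D^2]<\infty$ in (A1), the $L\log L$ condition $\E[D^2\log D]<\infty$ required for the textbook Kesten-Stigum norming may fail, so one must fall back on Seneta-Heyde-type norming and verify that the absolute-continuity conclusion for $W_i$ still goes through. A secondary subtlety is to run the argument conditionally on the (bounded, random) initial counts $a_1, a_2$, which are the degrees of the two uniformly chosen seed vertices.
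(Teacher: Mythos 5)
Your route is essentially the paper's: couple the early exploration with two independent continuous-time Markov branching processes with offspring law $D^*-1$, fall back on Seneta--Heyde norming because (A1) does not give the $L\log L$ condition, identify $V=W_1/(W_1+W_2)$ and obtain the positive density from absolute continuity of the normed limit, and for $\lambda_1<\lambda_2$ argue that the ratio of the two populations vanishes. The ``main obstacle'' you flag at the end is in fact the step the paper settles by citation: the Seneta--Heyde limit of a supercritical Galton--Watson process (here without extinction, since $D^*-1\ge1$) has a strictly positive density on $(0,\infty)$ by Athreya--Ney, Theorem II.5.2 and the remark following it, and the time points $t_n$ are handled either by taking them to be integers (discrete skeleton) or by Cohn's continuous-time version. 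So that part of your plan goes through exactly as you hope.

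The one place your plan would not survive as written is the choice of $t_n$. The exploration produces new vertices whose degrees follow (essentially) $D_n^*$, the size-biased \emph{empirical} law, not $D^*$; to claim that $(\cSe_t,\cSt_t)$ agrees with high probability up to time $t_n$ with $n$-independent branching processes having offspring $D^*-1$, you must couple all of the order $e^{\alpha_2 t_n}$ offspring draws from $D_n^*$ with draws from $D^*$, and (A1) provides no rate for the total variation distance between $D_n^*$ and $D^*$. A concrete prescription like $t_n\le (4\max_i\alpha_i)^{-1}\log n$ controls the birthday/collision issue (reused half-edges) but not this discrepancy; $t_n$ must grow slowly enough relative to that unknown rate, which is why the paper proves that the first disagreement time $\tau_n$ satisfies $\tau_n\pto\infty$ (via a fixed-$T$, two-stage coupling through the with-replacement process with offspring $D_n^*-1$) and then extracts a deterministic $t_n\to\infty$ with $\P(\tau_n\le t_n)\to0$ by a soft diagonal argument, rather than exhibiting $t_n$ explicitly. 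Relatedly, in the case $\lambda_1<\lambda_2$ your one-line claim $c_1(t_n)/c_2(t_n)\to0$ is not automatic outside the Kesten--Stigum regime: it needs the Seneta--Heyde ratio property $c_{k+1}/c_k\to m_i$ for the two skeletons, or the paper's device of time-rescaling the slower process so that both share a single norming sequence and then comparing $\cBhe_k$ with $\cBhe_{\lambda k}$. Both points are fixable, but they are precisely where the proof has to do work that the plan currently waves through.
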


We now consider the initial phase of the continuous time exploration process when $t$ is so small that rather few vertices have been infected. First consider the general case with $\gl_1,\gl_2>0$, possibly different, and the process described by \ref{sj1}--\ref{sjnew} above. In order to study the initial phase, we introduce the corresponding process where half-edges in \ref{sj1} are drawn with replacement, that is, the half-edge $r$ is chosen uniformly at random from the set of \emph{all} half-edges, independently of previous picks. In this version we do not have to keep track of the actual sets of active half-edges, only their numbers, which we denote by $\cBe_t$ and $\cBt_t$. Moreover, we pretend that the chosen half-edge and its vertex are not used before, so we ignore \ref{sjold} and always update $\cBe$ and $\cBt$ as in \ref{sjnew}. This means that $\cBe$ and $\cBt$ are two independent continuous time Markov branching processes with intensities $\gl_1$ and $\gl_2$, respectively, and the same offspring distribution $D_n^*-1$, where $D_n^*$ is the size-biased distribution corresponding to the empirical distribution $D_n$, that is, $\P(D_n^*=d):=d\P(D_n=d)/\E[D_n]$. Of course, we take $\cBi_0=a_i$.

Furthermore, define $\cBhi_t$ to be a branching process defined as $\cBi_t$ but with
the offspring distribution changed to $D^*-1$. Thus $\cBhi_t$, unlike $\cSi_t$ and $\cBi_t$, does not depend on $n$. Since $\E [D^*-1]=\E[D(D-1)]/\E[D]<\infty$, there is no explosion, and $\cBhi_t$ is a.s.\ finite for all $t$. Specifically, for every fixed $T<\infty$, the process $\cBhi$ has a.s.\ only a finite number of births (infections) in $[0,T]$. Moreover, since $D_n\dto D$ and $\E[D_n]\to\E [D]<\infty$, we have that $D_n^*\dto D^*$. It follows that, for every fixed $T<\infty$, we can couple $\cBi$ and $\cBhi$ such that they agree with probability $1-o(1)$ each time an individual gets offspring at a time $t\leq T$, that is, \whp{} $\cBi_t=\cBhi_t$ for all $t\le T$.

Now return to the actual exploration process. We can obtain it from the
version with replacement by accepting a selected half-edge $r$ if it is
free, and otherwise resampling. Moreover, we also check if the accepted
half-edge already is active, and then we apply \ref{sjold} instead of
\ref{sjnew}. During a fixed time interval $[0,T]$, the process $\cBhi_t$
has a.s.\ only finitely many births and thus, since $\cBi_t=\cBhi_t$ \whp{} on this
interval, the number of births in $[0,T]$ for $\cBi_t$ is $O_p(1)$. Furthermore, the number of half-edges that are paired in $[0,T]$ is $O_p(1)$, and so is the number of half-edges that are declared active in $[0,T]$. Hence, at each of the $O_p(1)$  births in $[0,T]$, the probability that a paired or active half-edge is picked in the process $\cBi_t$ is $o(1)$. Consequently, \whp, only free inactive half-edges are selected in $\cBi_t$ for $t\le T$ and the process then agrees completely with $\cSi_t$ for $t\le T$.

We have shown that the processes $\cSi_t$ and $\cBhi_t$ can be coupled (for $i=1,2$ simultaneously) such that, for every fixed $T$, we have that $\cSi_t=\cBhi_t$ for $t\le T$. Let
$$
\tau_n:=\inf\bigset{t\ge0:\cSi_t\neq\cBhi_t \text{ for some $i\in\set{1,2}$}}.
$$
It follows that $\P(\tau_n\le T)\to0$ for every fixed $T$, that is, $\tau_n\pto\infty$. This implies that there is a deterministic sequence $t_n\to\infty$ such that $\P(\tau_n\le t_n)\to0$. In other words,
\whp{}
\begin{equation}\label{x=z}
\cSe_t=\cBhe_t
\quad\text{and}\quad
\cSt=\cBht_t\quad
\text{for } t\le t_n.
\end{equation}
Fix such a sequence $t_n\to\infty$ where, for later use, we pick the
sequence such that each $t_n$ is an integer. 
For the proof of Theorem
\ref{th:main}, it will be useful to adjust the sequence slightly to ensure
that the number of vertices that have been infected at time $t_n$ is small.
Thus, let $\cN_t$ be the number of edges
identified in the exploration process at time $t$; this equals the number of
times that \ref{sj1} has been performed. Also let $\cNh_t$ be the analogous
quantity for the process $\cBhe_t\cup\cBht_t$. With the coupling above, we
have  $\cN_t=\cNh_t$ for $t<\tau_n$, and hence \whp{}
$\cN_{t_n}=\cNh_{t_n}$. We may assume, by decreasing $t_n$ if necessary,
that $\cNh_{t_n}\le n\qqq$ \whpx.

We also define a related sequence of integers $\nu_n$ such that, 
in the discrete time exploration process, the branching process approximation remains valid beyond step $\nu_n$.
To do this, note that $\cNh_{t_n}\asto\infty$ as
\ntoo, since $t_n\to\infty$. Hence, $\cNh_{t_n}\pto\infty$ and
$\cN_{t_n}\pto\infty$, and thus there exists a deterministic sequence
$\nu_n$ of integers such that $\nu_n\to\infty$ and \whp{}
\begin{equation}\label{nun}
n\qqq\ge\cNh_{t_n}= \cN_{t_n}\ge \nu_n.
\end{equation}

Finally note that, by our assumptions, $D_n^*\ge2$ and thus $D^*\ge 2$ so that $D^*-1\ge1$. This means that the branching processes $\cBhi_t$ never decrease. In particular, they never become extinct, and therefore $\cBhi_t\to\infty$ a.s.\ as \ttoo.

With the above coupling at hand we can prove Proposition \ref{prop:initial}.

\begin{proof}[Proof of Proposition \ref{prop:initial}]
Suppose first that $\gl_1=\gl_2$. The branching processes $\cBhe_t$ and $\cBht_t$ are independent and have the same offspring distribution, but possibly different initial values $a_1$ and $a_2$. If we restrict to integer values of $t$, we obtain two independent Galton--Watson processes $\cBhe_k$ and $\cBht_k$ with the same offspring distribution. Moreover, this offspring distribution has a finite mean $m>1$, since, by assumption, $\E[D^2]<\infty$ and thus $\E[D^*]<\infty$ (in fact we have $m=e^{\E[D^*-1]}$). By the Seneta--Heyde theorem \cite{Heyde} (see also \cite[Theorem I.10.3]{AthreyaNey}) there exists a derministic sequence $c_k$ such that $\cBhi_k/c_k\to W_i$ a.s., where $W_i\in(0,\infty)$ is a random variable, and thus
$$
\frac{\cBhe_k}{\cBhe_k+\cBht_k} \asto V
$$
for some random variable $V\in(0,1)$. By \cite[Theorem II.5.2]{AthreyaNey} and the subsequent remark, the variable $W_i$ ($i=1,2$) is continuous with strictly positive density on $(0,\infty)$ and hence $V$ is continuous with strictly positive density on $(0,1)$. Since $t_n\to\infty$, and we have assumed that $t_n\in\bbN$, it follows that
\begin{equation}\label{Szlim}
\frac{\cBhe_{t_n}}{\cBhe_{t_n}+\cBht_{t_n}} \asto V \in(0,1)
\end{equation}
as \ntoo. Alternatively, we can use the continuous-time version of the Seneta--Heyde theorem by Cohn \cite{Cohn} to directly arrive at~\eqref{Szlim}. Since $\cSi_{t_n}=\cBhi_{t_n}$ \whp\, by \eqref{x=z}, it follows from \eqref{Szlim} that
\begin{equation}\label{Sxlim}
\cM_{t_n}=  \frac{\cSe_{t_n}}{\cSe_{t_n}+\cSt_{t_n}} \dto V\in(0,1),
\end{equation}
and the first part of Proposition \ref{prop:initial} is proved.

Now suppose that $\gl_1<\gl_2$. By time-scaling we may assume that $\gl_1=1$ and $\gl_2=\gl>1$. Then $\cBhe_{\gl t}$ and $\cBht_t$ are two independent continuous time branching processes, with the same intensity and the same offspring distribution (with finite mean). Hence, as in the case with equal intensities, there exist $c_k$ such that a.s.\
\begin{align}
\cBhe_{\gl k}/c_k&\to W_1 \label{ax1}
\\
\cBht_k/c_k&\to W_2,  \label{ax2}
\end{align}
where $W_1$ and $W_2$ are random variables with $W_i\in(0,\infty)$
a.s. Furthermore, $c_{k+1}/c_k\to m>1$. For any fixed $j\ge0$, we have for large enough $k$ that $\gl k\ge k+j$, and thus $\cBhe_{k+j}\le\cBhe_{\gl k}$. Hence, by \eqref{ax1}, a.s.
$$
\limsup_\ktoo\frac{\cBhe_k}{c_k}
=
\limsup_\ktoo\frac{\cBhe_{k+j}}{c_{k+j}}
\le
\limsup_\ntoo\frac{\cBhe_{\gl k}}{c_k}\cdot\frac{c_k}{c_{k+j}}
=W_1 m^{-j}.
$$
Since $W_1<\infty$, $m>1$ and $j\ge0$ is arbitrary, it follows that $\limsup_\ktoo\cBhe_k/c_k=0$ a.s.\ and thus, recalling from \eqref{ax2} that $\cBht_k/c_k\to W_2>0$, that $\cBhe_k/\cBht_k\asto 0$. Hence, \eqref{Szlim} and \eqref{Sxlim} hold with $V\equiv 0$.
\end{proof}

\section{The deterministic phase}

In this section we show that the fraction $M_k$ of active type 1 half-edges among all active half-edges remains roughly constant after the initial phase in the exploration process for equal intensities. At the very end of the process, when most half-edges have already been paired, this might fail, but we show that the fraction is indeed constant during the main part of the process. Here we will work mainly in discrete time, and then connect to continuous time in the proof of Theorem \ref{th:main}. We denote the total number of edges in the graph by $N$, that is,
$$
N=\frac{1}{2}\sum_id_i;
$$
this is the total number of steps in the discrete time exploration process.

\begin{prop}\label{prop:main} Assume that $\lambda_1=\lambda_2=1$ and let $\nu_n$ be defined as in \eqref{nun}. As \ntoo\, we have for any $\eps>0$ that
  \begin{equation}\label{lx1}
    \sup_{\nu_n\le k\le (1-\eps)N}\bigabs{M_k-M_{\nu_n}} \pto 0.
  \end{equation}
\end{prop}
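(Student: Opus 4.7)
My plan is to prove that $(M_k)$ is a martingale when $\gl_1=\gl_2$, bound its quadratic variation over $[\nu_n,(1-\eps)N]$, and conclude via Doob's $L^2$ maximal inequality.

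For the martingale property, I would proceed by case analysis on the six possible transitions at step $k+1$. Writing $A_i = \Si_k$, $S = S_k$, $F = 2N-2k$, and letting $I_d$ denote the number of inactive free half-edges attached to vertices of degree $d$, equal intensities make $q$ uniform on the $S$ active half-edges and $r$ uniform on the remaining $F-1$ free ones. The four active-active transitions give $M_{k+1}=(A_1-c)/(S-2)$ for $c\in\{0,1,2\}$ and the two active-inactive transitions give $M_{k+1}=(A_1+c')/(S+d-2)$ for $c'\in\{0,d-2\}$. Grouping the four active-active contributions to $\E[M_{k+1}\mid\cF_k]$ and collapsing them via the identity
$$
a(a-1)(a-2) + 2ab(a-1) + ab(b-1) = a(a+b-1)(a+b-2)
$$
gives $A_1(S-1)/[S(F-1)]$, and grouping the two active-inactive contributions and using the cancellation $A_1(A_1+d-2)+A_1 A_2 = A_1(S+d-2)$ (which eliminates the $d$-dependence in the numerator) gives $A_1(F-S)/[S(F-1)]$. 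Their sum equals $A_1/S = M_k$, so $(M_k)$ is a martingale.

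The same case analysis yields jump bounds $|\Delta M_k|\le 4/S_k$ in the four active-active cases and $|\Delta M_k|\le (d_{y_k}-2)/(S_k+d_{y_k}-2)$ when a vertex $y_k$ of degree $d_{y_k}$ is newly infected. The main task is to show $\E\bigsqpar{\sum_{k=\nu_n}^{(1-\eps)N-1}(\Delta M_k)^2} = o(1)$, and this is the crux of the argument. I would split the range $[\nu_n,(1-\eps)N]$ and treat the two pieces separately: on a bulk interval $[\delta n,(1-\eps)N]$ a standard fluid-limit / LLN analysis of the configuration-model exploration (the rescaled process $S_{\floor{tN}}/n$ concentrates around a strictly positive deterministic profile on compact subintervals of $(0,1)$ under assumption (A2), which ensures the giant component is essentially everything and the drift at new infections is non-negative) gives $S_k\gtrsim n$ \whpx; on the initial interval $[\nu_n,\delta n]$ the branching-process coupling of Section~2 combined with Seneta--Heyde-type growth yields $S_k\to\infty$ fast enough that $\sum_{k=\nu_n}^{\delta n} 1/S_k^2$ is $o_p(1)$. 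The active-inactive contribution is controlled using the bound $\sum_v d_v^2 = n\,\E[D_n^2] = O(n)$ from (A1) and the deterministic inequality $D_{\max}^2 \le \sum_v d_v^2 = O(n)$.

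Finally, Doob's $L^2$ maximal inequality applied to the martingale $(M_k-M_{\nu_n})_{k\ge\nu_n}$ yields
$$
\E\biggsqpar{\sup_{\nu_n\le k\le\floor{(1-\eps)N}}(M_k-M_{\nu_n})^2} \le 4\,\E\biggsqpar{\sum_{k=\nu_n}^{\floor{(1-\eps)N}-1}(\Delta M_k)^2} = o(1),
$$
which implies \eqref{lx1}. The hardest step is the quantitative control of $S_k$ throughout the deterministic phase, bridging the branching-process regime of Section~2 and the fluid-limit regime of the bulk; the martingale identity and Doob step are standard.
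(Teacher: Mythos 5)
Your overall architecture (martingale property, quadratic-variation bound, Doob's $L^2$ maximal inequality) is the same as the paper's, and your martingale computation is correct: the paper gets the same conclusion more quickly by conditioning on whether a new vertex is infected and on its degree, and noting that the type assignment of the relevant half-edges is then type 1 with conditional probability $M_k$, but your direct six-case enumeration with the algebraic identities checks out. (One technicality in both places: since your control of $S_k$ is only w.h.p., you need to stop the martingale at $\tau=\min\{k\ge\nu_n: S_k<ck\}$, as the paper does, before you can assert an expectation bound for Doob; this is fixable.) However, there are two genuine gaps in the quadratic-variation step, which you yourself identify as the crux. First, your lower bound on $S_k$ does not cover the range between the end of the branching-process regime and the bulk: the coupling of Section~2 is only valid up to time $t_n$, i.e.\ up to $\cN_{t_n}\le n^{1/3}$ pairings, so it cannot give growth of $S_k$ on $[\nu_n,\delta n]$; moreover ``Seneta--Heyde growth'' refers to branching-process time (generations or continuous time), not to the number $k$ of pairings, and since all increments can be $\ge -2$ with many degree-$2$ vertices, growth of $S_k$ in $k$ is not automatic. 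The paper's Lemma~\ref{LC} is devoted precisely to this bridge: a continuous-time representation with exponential life lengths, the two claims (a $(1-\eps)$-fraction of pairings occur before a fixed time $t_0$, and a positive proportion of uninfected degree-$\ge3$ vertices persists up to $t_0$), and a coupling with a drifted i.i.d.\ walk, yielding $S_k\ge ck$ uniformly on $[\nu_n,(1-\eps)N]$ --- which is all that is needed; the fluid-limit bound $S_k\gtrsim n$ in the bulk is stronger than necessary and is itself only asserted by citation in your sketch.

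Second, and more seriously, the new-infection (active--inactive) contribution to the quadratic variation cannot be controlled by the deterministic inequalities $\sum_v d_v^2=O(n)$ and $D_{\max}=O(\sqrt n)$ alone. With only $S_k\ge ck$, a jump at a step where a vertex of degree $d$ is infected is of size up to $d/(ck+d)$, which is of order $1$ for $k\lesssim d$; a degree sequence with, say, $\sqrt n$ vertices of degree $n^{1/4}$ satisfies your constraints, yet if those vertices were infected in the first steps after $\nu_n$ the sum of squared jumps would be of order $n^{1/4}$. What rules this out is probabilistic, not deterministic: the chance that a given vertex $i$ is infected at any given step $k+1\le(1-\eps)N$ is at most $Cd_i/n$, which gives $\E\bigsqpar{(d^{(k)}/(k+d^{(k)}))^2}\le C\,\E\bigsqpar{D_n^3/(k+D_n)^2}$, and summing over $k\ge\nu_n$ one gets the bound $\E\bigsqpar{2D_n^2\land D_n^3/\nu_n}\to0$ using exactly the uniform square-integrability in (A1). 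This is the content of the paper's Lemma~\ref{LQ} (equations \eqref{win}--\eqref{sof}) and is missing from your argument; without it the quadratic-variation estimate, and hence \eqref{lx1}, does not follow.
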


\begin{remark}
Proposition \ref{prop:main} is valid for any sequence $\nu_n\to\infty$ with $\nu_n\le(1-\eps)N$. However, we will apply it to the sequence $\nu_n$ defined in \eqref{nun} and therefore formulate it for
this. The idea is that the branching process approximation in Section 2
remains valid beyond step $\nu_n$ in the discrete process, and Proposition
\ref{prop:main} then ensures that the proportion of type 1 vertices does not
change after that.
\end{remark}

The key observation in the proof of Proposition \ref{prop:main} is that $M_k$ is a martingale when $\lambda_1=\lambda_2$. We then show that the second moment assumption implies that the contribution to the quadratic variation of this martingale during the range $\nu_n$ to $(1-\eps)N$ is vanishingly small. With this at hand it is not hard to show \eqref{lx1}.

\begin{lemma}\label{LM}
If $\lambda_1=\lambda_2$, then $(M_k)_{k=0}^N$ is a martingale.
\end{lemma}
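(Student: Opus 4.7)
The plan is to verify the one-step identity $E[M_{k+1} \mid \mathcal{F}_k] = M_k$ directly, where $(\mathcal{F}_k)$ is the natural filtration of the discrete-time exploration; integrability is automatic since $M_k \in [0,1]$. On $\{S_k = 0\}$ the identity holds by definition \eqref{eq:Mk}, so I assume $S_k > 0$ and write $s_i := \Si_k$ and $s := S_k$. Because $\gl_1 = \gl_2$, step $k+1$ first picks an active half-edge $q$ uniformly among the $s$ choices, and then picks a partner $r$ uniformly among the remaining $F_k - 1$ free half-edges, where $F_k$ denotes the total number of free half-edges at step $k$. I will condition on the outcome of this pairing and verify that $E[\Se_{k+1}\mid\mathcal{F}_k,\text{outcome}] = M_k \cdot S_{k+1}$ in every case; dividing by $S_{k+1}$ then yields $E[M_{k+1}\mid\mathcal{F}_k, S_{k+1}] = M_k$ on $\{S_{k+1} > 0\}$, while on $\{S_{k+1} = 0\}$ the definition \eqref{eq:Mk} gives $M_{k+1} = M_k$.

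The outcomes split naturally into two cases. \emph{Case (i): $r$ is inactive at a vertex $y$ of degree $d_y$.} Here $S_{k+1} - S_k = d_y - 2$, and $\Se_{k+1} - \Se_k = (d_y - 2)\mathbf{1}\{q\text{ is type }1\}$. Since inactive half-edges sit at uninfected vertices and active ones at infected vertices, these sets are disjoint; in particular, every inactive half-edge at $y$ differs from any active $q_0$, so the joint probability $P(q = q_0,\; r\text{ inactive at }y) = (1/s) \cdot d_y/(F_k - 1)$ is independent of the active $q_0$. Hence $q$ remains uniform over active half-edges under this conditioning, the indicator has conditional mean $s_1/s = M_k$, and $E[\Se_{k+1} - \Se_k \mid \text{case (i) at }y] = M_k(S_{k+1} - S_k)$. \emph{Case (ii): $r$ is active.} Each ordered pair of distinct active half-edges has unconditional probability $1/(s(F_k-1))$, so conditionally on this case the pair $(q, r)$ is uniform over such pairs. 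Then $\Se_{k+1} - \Se_k = -\mathbf{1}\{q\text{ is type }1\} - \mathbf{1}\{r\text{ is type }1\}$, and by exchangeability each indicator has conditional mean $s_1/s = M_k$, giving $E[\Se_{k+1} - \Se_k \mid \text{case (ii)}] = -2 M_k = M_k(S_{k+1} - S_k)$.

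Combining the two cases via the tower property gives $E[\Se_{k+1}\mid\mathcal{F}_k, S_{k+1}] = \Se_k + M_k(S_{k+1} - S_k) = M_k S_{k+1}$, and the martingale identity follows as described. The only points that require some care are the conditional uniformity in case (i) and the exchangeability in case (ii); both are immediate from the observation that under $\gl_1 = \gl_2$ every active half-edge is interchangeable, together with the fact that the sets of active and inactive half-edges are attached to disjoint vertex sets, so that conditioning on the nature of $r$ does not bias the identity of $q$.
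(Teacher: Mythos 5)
Your proof is correct and follows essentially the same route as the paper's: you condition on the structural outcome of step $k+1$ (which half-edge configuration is hit, hence the value of $S_{k+1}$) and use that, under $\lambda_1=\lambda_2$, the type of the infecting half-edge (and of an active partner in the "old vertex" case) is type 1 with conditional probability $M_k$, giving $\E[\Se_{k+1}\mid\cdot]=M_kS_{k+1}$ before handling $S_{k+1}=0$ via the definition and applying the tower property. Your explicit verification of the conditional uniformity and exchangeability just spells out what the paper asserts via its intermediate $\sigma$-field $\cF_k^+$.
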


\begin{proof} Recall that $S_k$ denotes the total number of active half-edges after $k$ steps. Define $\gD S_k=S_{k+1}-S_k$, and similarly for other sequences.

Let $\cF_k$ be the $\gs$-field generated by all events up to step $k$. Next, reveal whether a new vertex is infected in step $k$, and if so, the identity (and thus the degree) of the new infected vertex (however, we do not yet reveal the classification of the involved half-edges). Let $\cF_k^+\supset \cF_k$ denote the $\gs$-field generated by the events revealed so far. If a new node of degree $d$ is infected, then $\gD S_k=d-2$, and $\gD \Se_k$ is either $d-2$ or 0, with conditional probabilities (given $\cF_k^+$) $M_k$ and $1-M_k$, respectively. Hence, in this case,
$$
\E\left[\gD \Se_k\mid\cF_k^+\right] = M_k(d-2)
$$
and thus
$$
\E\left[\Se_{k+1}\mid\cF_k^+\right] = \Se_k+M_k(d-2)
= M_k(S_k+d-2)=M_kS_{k+1};
$$
Hence, $\E\bigpar{M_{k+1}\mid\cFx_k}=M_k$. If no new vertex is infected, and
$S_k>0$, then $\gD S_k=-2$. Since the two paired half-edges are then both drawn uniformly at random (without replacement) from the active half-edges,
each one of them has (conditional) probability $M_k$ of being of type 1. Hence
$$
\E\left[\gD \Se_k\mid\cF_k^+\right] = -2M_k
$$
and thus
$$
\E\left[\Se_{k+1}\mid\cF_k^+\right] = \Se_k-2M_k= M_k(S_k-2)=M_kS_{k+1}.
$$
Consequently, if $S_k>2$, so that $S_{k+1}>0$, then $\E\left[M_{k+1}\mid\cFx_k\right]=M_k$. If $S_k=2$, so that $S_{k+1}=0$, or if $S_k=S_{k+1}=0$, then $M_{k+1}=M_k$ by definition. Hence, in all cases $\E\left[M_{k+1}\mid\cFx_k\right]=M_k$, and thus $\E\left[M_{k+1}\mid\cF_k\right]=M_k$.
\end{proof}

In order to obtain a bound on the quadratic variation of (a stopped version of) $M_k$, we need to show that $S_k$ grows at least linearly in $k$ throughout the range $\nu_n$ to $(1-\eps)N$. 

\begin{lemma}\label{LC}
If\/ $\lambda_1=\lambda_2$, then, for every $\eps>0$ there exists $c>0$ such that \whp{} $S_k\ge ck$ whenever $\nu_n\le k\le (1-\eps)N$.
\end{lemma}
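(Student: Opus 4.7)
I would split $[\nu_n,(1-\eps)N]$ into an initial regime and a bulk regime, handled respectively by the branching process coupling of Section~2 and by a Wormald-type concentration argument.

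In the initial regime $k\in[\nu_n,K_n]$ with $K_n=n^{1/4}$ (well inside the range where the cumulative collision probability $O(K_n^2/N)$ is $o(1)$, so the coupling \eqref{x=z} is valid step-by-step), we have $S_k=\cBhe_k+\cBht_k$ w.h.p. In discrete time this total evolves as the random walk $a_1+a_2+\sum_{j=1}^{k}(D^*_j-2)$ with i.i.d.\ increments, since at each pairing a single individual (uniformly across both types, by $\gl_1=\gl_2$) is replaced by $D^*-1$ children. By (A2) we have $D^*\ge 2$ a.s.\ and $\E[D^*]>2$, so the walk has strictly positive drift $\E[D^*-2]$. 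The strong law of large numbers then gives $(\cBhe_k+\cBht_k)/k\to \E[D^*-2]>0$ almost surely, so $S_k\ge c_1 k$ uniformly on $[\nu_n,K_n]$ w.h.p., once $\nu_n$ is sufficiently large (which holds since $\nu_n\to\infty$).

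In the bulk regime $k\in[K_n,(1-\eps)N]$, a direct computation, conditioning on whether step $k+1$ discovers a new vertex and on its degree, yields
\[
\E[\Delta S_k\mid\cF_k]=\frac{\sum_d d^2 n_d(k)}{F_k-1}-2, \qquad \E[\Delta n_d(k)\mid\cF_k]=-\frac{d\,n_d(k)}{F_k-1},
\]
where $n_d(k)$ is the number of uninfected degree-$d$ vertices after step $k$ and $F_k=2(N-k)$. In the fluid limit $k=tN$ one obtains $n_d(k)/n\to p_d(1-t)^{d/2}$ and $S_k/n\to s(t):=\E[D](1-t)-\E[D(1-t)^{D/2}]=\E[D((1-t)-(1-t)^{D/2})]$. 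A short calculation gives $s(0)=0$, $s'(0)=\E[D(D-2)]/2>0$ by (A2), and $s(t)>0$ for all $t\in(0,1)$ (since each summand is nonnegative under $D\ge 2$, and strictly positive with positive probability when $t\in(0,1)$ and $\P(D>2)>0$). Hence $s(t)/t$, extended continuously to $t=0$ by $s'(0)$, is bounded below by some $c_2(\eps)>0$ on $[0,1-\eps]$. A Freedman-type concentration bound for the Doob martingale of $S_k$, using $\E[(\Delta S_k)^2\mid\cF_k]\le C d_{\max}$ together with $d_{\max}=o(\sqrt n)$ (which follows from the uniform square integrability implicit in (A1)), then gives $|S_k-n s(k/N)|=o(k)$ uniformly in this regime, whence $S_k\ge c_3 k$ w.h.p.

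\textbf{Main obstacle.} The pinch point is the crossover around $k\sim\sqrt n$: the branching process coupling is reliable only up to $k=o(\sqrt n)$, while the Freedman estimate delivers $|M_k|=o(k)$ only once $k$ is appreciably larger than $d_{\max}$. These two windows just about match because $d_{\max}=o(\sqrt n)$, but closing any residual gap (and producing a single clean bound valid on the entire range $[\nu_n,(1-\eps)N]$) requires some additional care, for example a direct second-moment estimate on the coupling error $\cBhe_k+\cBht_k-S_k$ for $k=O(\sqrt n)$, which is $O_p(1)$ in expectation because the expected number of collisions up to step $k$ is $O(k^2/N)$.
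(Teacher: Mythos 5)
Your two-regime strategy (random-walk/branching comparison up to a cutoff $K_n$, then a fluid-limit plus martingale-concentration argument) is genuinely different from the paper's proof, and in principle it could be made to work; but as written there is a real gap at the crossover, and your fixed cutoff $K_n=n^{1/4}$ does not close it. The bulk estimate $|S_k-ns(k/N)|=o(k)$ cannot come from a Freedman bound for all $k\ge n^{1/4}$: the conditional variance accumulated by step $k$ is of order $k\,\E[D_n^3]/\E[D_n]$, which (A1) only lets you bound by $Ck\,d_{\max}$ with $d_{\max}=o(\sqrt n)$. If, say, $d_{\max}\approx n^{0.45}$, then for $k\approx n^{0.3}$ the martingale fluctuations are of order $\sqrt{k\,d_{\max}}\approx n^{0.375}\gg k$, so the claimed $o(k)$ bound fails on a nontrivial part of your bulk range; moreover uniformity in $k$ costs a further logarithmic factor, and $d_{\max}\log n$ need not be $o(\sqrt n)$, so the window in which the collision bound $O(K_n^2/N)=o(1)$ holds and the window in which the concentration bound is effective are not guaranteed to overlap by a naive union bound. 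The scheme can be repaired by taking an adaptive cutoff with $d_{\max}\ll K_n\ll\sqrt n$ (possible since $d_{\max}=o(\sqrt n)$) together with a dyadic-block maximal inequality instead of a pointwise union bound, but this is precisely the step that is missing; ``the windows just about match'' is where the proof currently is not a proof. Your suggested patch for $k=O(\sqrt n)$ is also not sufficient as stated: after a collision the true process and the with-replacement walk differ not by $O(1)$ half-edges but potentially by all descendants of the discrepant individuals, so you need an actual domination statement (for a lower bound one can argue that $S_k$ dominates the walk minus twice the number of collisions, since collisions only delete active half-edges, but this must be set up and proved, not inferred from an expected-collision count). Two smaller points: the increments in the initial regime are $D_n^*-2$, not $D^*-2$, so you need a triangular-array law of large numbers using uniform integrability of $D_n^*$ from (A1); and the infinitely many variables $n_d(k)$ in the differential-equation step require a degree truncation controlled by the uniform integrability in (A1).

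For comparison, the paper sidesteps both the $d_{\max}$ issue and the crossover entirely. It uses a continuous-time representation in which each half-edge carries an i.i.d.\ exponential life length (as in the Janson--Luczak exploration) to prove two claims: \whp{} at least $(1-\eps)N$ pairings occur before a fixed time $t_0$, and up to time $t_0$ a proportion at least $\delta$ of the vertices of degree $\ge3$ remain uninfected. This yields, for all $k\le(1-\eps)N$, $\P(\Delta S_k\ge1\mid\cF_k)\ge\delta\bigl(1-S_k/(2N-2k)\bigr)$ and $\P(\Delta S_k=-2\mid\cF_k)\le S_k/(2N-2k)$, so whenever $S_k\le\eps\delta N/4$ the increment stochastically dominates a fixed biased i.i.d.\ step; coupling $S_k$ from below with that bounded-increment random walk and applying the law of large numbers gives the linear lower bound on the whole range $[\nu_n,(1-\eps)N]$ in one stroke, with $\nu_n=o(\sqrt n)$ used only to rule out early extinction. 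Your route, once repaired along the lines above, would in fact give the sharper statement $S_k\approx n\,s(k/N)$, which is more than the lemma requires.
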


\begin{proof}
Assume that $\lambda_1=\lambda_2=1$. The total set of active half-edges then
evolves as in a one-type process with a single unit rate infection type. We
consider a continuous time representation of such a process, inspired by
\cite{SvanteMalwina}. As in our continuous time exploration process, each
half-edge is throughout classified as \emph{free} or \emph{paired}, and free
half-edges are labeled as \emph{active} or \emph{inactive}. All half-edges
are assigned independent unit rate exponential life lengths and, to start
the growth, two vertices are chosen uniformly at random and their half-edges
are declared active, while all other half-edges are inactive. The process
then evolves in that an active half-edge $q$ is chosen uniformly at random
and, when the life length of a free half-edge $r\neq q$ (active or inactive)
expires, then $q$ and $r$ are paired. The vertex to which $r$ is attached
becomes infected (if it was not infected already) and its remaining
half-edges are activated. This procedure is repeated until there are no
active half-edges left. It is straightforward to verify that the process is
equivalent to the two-type growth process with equal rates once types are ignored, and we furthermore ignore the time scales. Note that, in the original continuous time process, the growth is slow in
the beginning when there are few active half-edges, while in this version,
the growth is fast in the beginning when there are many free half-edges
whose life lengths compete.

We first show that a large proportion of the edges are identified in finite
time. 
\begin{claim}
For every $\eps>0$ there exists $t_0=t_0(\eps)$ such that the number of pairings up to time $t_0$ is at least $(1-\eps)N$ \whp
\end{claim}

\begin{proof}[Proof of claim]
Note that the time of the $k$th pairing is the sum of $k$ independent
exponentials with parameters $2N-1,2N-3,\ldots,2N-2k+1$. Let
$\xi_1,\xi_2,\ldots,\xi_N$ be independent and exponentially distributed with
parameter 2 and write $\xi_{(1)}<\xi_{(2)}<\cdots<\xi_{(N)}$ for the order
statistics of the $\xi_k$'s. Due to the memoryless property $\xi_{(k)}$ is
the sum of $k$ independent exponentials with parameters
$2N,2N-2,\ldots,2N-2k+2$, and it follows that the time of the $k$th pairing is
stochastically dominated by $\xi_{(k+1)}$. We are hence done if we show that
$\xi_{(\lceil(1-\eps)N\rceil+1)}\le t_0$ \whp\, for some $t_0$ or, equivalently, that the
number of $\xi_k$ that exceed $t_0$ is at most $\eps N-1$. This however
follows from the law of large numbers if we pick $t_0$ large such that
$\PP(\xi_k>t_0)<\eps$.
\end{proof}

\begin{claim}
There exists $\delta>0$ such that throughout the interval $[0,t_0]$ the proportion of uninfected vertices with degree at least $3$ is at least $\delta$ \whp
\end{claim}

\begin{proof}[Proof of claim]
Fix $d\ge3$ such that $p_d>0$. Let $V_d(t)$ denote the number of vertices of degree $d$ with all half-edges with life lengths longer than $t$. Again by the (weak) law of large numbers we have that
$$
\Bigl|\frac{1}{n}V_d(t_0)-p_d\,e^{-dt_0}\Bigr|\pto0\quad\text{as }n\to\infty.
$$
The number of uninfected vertices of degree $d$ at time $t_0$ is at least $V_d(t_0)-2$, so the claim follows.
\end{proof}

We now return to the discrete time exploration process. Recall that $\Delta S_k=S_{k+1}-S_k$ and
that $\mathcal{F}_k$ is the $\sigma$-field of events determined by the
process up to time $k$. After $k$ steps there are $2N-2k$ unpaired
half-edges and hence 
$$
\P\big(\Delta S_k=-2\bigmid\mathcal{F}_k\big)\,=\,\frac{S_k-1}{2N-2k-1}\,\le\,\frac{S_k}{2N-2k}.
$$
If the active half-edge that is paired in step $k+1$ is connected to an
inactive half-edge attached to a vertex with degree at least 3, then the
number of active half-edges increases. The degree of the vertex of the
inactive half-edge has a size biased distribution, and hence the probability that it is at
least 3 is at least as large as the proportion of uninfected vertices with
degree at least 3. Combining the above two claims we find that, for all
$k=1,2,\ldots,(1-\eps)N$, \whp 
$$
\P\big(\Delta S_k\ge1\bigmid\mathcal{F}_k\big)\,\ge\,\delta\Big(1-\frac{S_k}{2N-2k}\Big).
$$
In particular, whenever $1\le S_k\le \eps\delta N/4$, we have that
$$
\P\big(\Delta S_k=-2\bigmid\mathcal{F}_k\big)\le\delta/8\quad \text{and}\quad\P\big(\Delta S_k\ge1\bigmid\mathcal{F}_k\big)\ge\delta/2.
$$
Now, let $\zeta_1,\ldots,\zeta_N$ be i.i.d.\ random variables taking values
$-2$ and $1$ with probability $\delta/8$ and $\delta/4+\eps\delta/8$,
respectively, and otherwise the value $0$, and define $X_k:=\sum_{j=1}^k\zeta_j$. Then, by the law of
large numbers, $X_k>\eps\delta k/16$ \whp\, for all $k\ge\nu_n$, while $X_k$
is unlikely to ever exceed $\eps\delta N/4$. Moreover, since $\nu_n=o(\sqrt{n})$ by  \eqref{nun},
the number of active half-edges is unlikely to ever hit zero in the first $\nu_n$ steps.\footnote{Indeed, either $S_k$ exceeds $2\nu_n$ before reaching zero, which is good, or the probability of pairing two active half-edges is at most $2\nu_n/(N-2\nu_n)$  in each of these step, so the claim follows from the union bound.} We conclude that there is a coupling between $(S_k)_{k\ge1}$ and $(X_k)_{k\ge1}$ such that \whp
$$
S_k\ge X_k\quad\text{for all }k=1,2,\ldots,(1-\eps)N.
$$
Consequently, $S_k\ge\eps\delta k/16$ \whp\, whenever $\nu_n\le k\le(1-\eps)N$.
\end{proof}

Fix $\eps>0$ and $c$ as in \refL{LC}, and let $\tau$ be the stopping time $\min\{k\ge\nu_n:S_k< ck\}$. Thus, by \refL{LC}, \whp{} $\tau>(1-\eps)N$. Let $\tM_k:=M_{k\land \tau}$, that is, the martingale $M$ stopped at $\tau$. Then $(\tM_k)_{k=0}^N$ is also a martingale. We consider the quadratic variation of this martingale.

\begin{lemma}\label{LQ}
As \ntoo,
$$
  \E\left[\sum_{k=\nu_n}^{(1-\eps)N} |\gD \tM_k|^2\right] \to0.
$$
\end{lemma}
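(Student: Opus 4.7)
The strategy is to split the sum by step type (double pairing versus new infection), bound each piece using the linear lower bound $S_k \geq ck$ from \refL{LC} valid on $\{k<\tau\}$, and exploit uniform integrability of $D_n^2$ provided by assumption (A1) to handle the new-infection contribution. For double-pairing steps, an explicit calculation shows $|\Delta M_k| \leq C/S_k$, so on $\{k<\tau\}$ one obtains
\[
\sum_{k\geq \nu_n}(\Delta \tilde M_k)^2\,\mathbf{1}\{\text{dp}\} \leq \sum_{k\geq \nu_n}\frac{C^2}{(ck)^2} = O(1/\nu_n) \to 0.
\]

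For new-infection steps, conditioning on the newly infected vertex $v$ having degree $d_v$, one computes $\E[(\Delta M_k)^2\mid\cF_k,v] = M_k(1-M_k)(d_v-2)^2/(S_k+d_v-2)^2$, while the conditional probability of infecting $v$ equals $d_v/(2N-2k-1)$ when $v$ is uninfected. Using $M_k(1-M_k)\leq 1/4$, the bounds $S_k\geq ck$ and $2N-2k-1\geq \eps N$ on the relevant event, and the integral comparison $\sum_{k\geq \nu_n}(ck+d_v-2)^{-2}\leq C/(c\nu_n+d_v-2)$, summation over $k$ yields
\[
\E\biggl[\sum_{k=\nu_n}^{(1-\eps)N}(\Delta \tilde M_k)^2\,\mathbf{1}\{\text{new infection}\}\biggr] \leq \frac{C}{N}\sum_v\frac{d_v(d_v-2)^2}{c\nu_n+d_v-2}.
\]

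It remains to show that this last expression is $o(1)$. Split the sum at the threshold $d_v = c\nu_n$. When $d_v>c\nu_n$, each term is at most $2d_v^2$, and $\sum_{d_v>c\nu_n}d_v^2 = n\,\E[D_n^2\,\mathbf{1}\{D_n>c\nu_n\}] = o(n)$ by uniform integrability of $D_n^2$ (a consequence of $\E[D_n^2]\to\E[D^2]<\infty$). When $d_v\leq c\nu_n$, each term is at most $2d_v^3/(c\nu_n)$, and the key interpolation
\[
\E\bigl[D_n^3\,\mathbf{1}\{D_n\leq A\}\bigr] \leq B\,\E[D_n^2] + A\,\E\bigl[D_n^2\,\mathbf{1}\{D_n>B\}\bigr]\qquad (0\leq B\leq A),
\]
applied with $A=c\nu_n$ and $B=\sqrt{c\nu_n}$, yields $(c\nu_n)^{-1}\sum_{d_v\leq c\nu_n}d_v^3 \leq n\,\bigl[\E[D_n^2]/\sqrt{c\nu_n} + \E[D_n^2\,\mathbf{1}\{D_n>\sqrt{c\nu_n}\}]\bigr] = o(n)$. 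Dividing by $N=\Theta(n)$ completes the new-infection estimate, and combined with the double-pairing contribution the lemma follows.

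The main obstacle is precisely this last step. A naive bound on the new-infection sum would involve $\sum_v d_v^3 = n\,\E[D_n^3]$, and $\E[D^3]$ may be infinite under (A1). The interpolation inequality above, together with the uniform integrability of $D_n^2$, is the device that closes the argument using only the second-moment hypothesis.
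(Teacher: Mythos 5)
Your proof is correct and follows essentially the same route as the paper: bound $|\gD\tM_k|$ on $\{k<\tau\}$ using $S_k\ge ck$ from \refL{LC}, control the new-infection steps via the size-biased probability $d_v/(2N-2k-1)\le Cd_v/N$, and reduce the resulting third-moment sum to second moments by truncating at degree $\approx\nu_n$ and invoking uniform integrability of $D_n^2$. Your explicit split at $d_v=c\nu_n$ with the interpolation inequality is just a hands-on version of the paper's bound $\E\bigl[2D_n^2\land D_n^3/\nu_n\bigr]\to0$, so the argument is the same in substance.
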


\begin{proof} Throughout the proof, $C$ denotes a constant, possibly depending on $\eps$ and $c$, that may be
different on each occurrence. Let $k\in[\nu_n,(1-\eps)N]$. We may suppose that $S_k\ge ck$, since otherwise $\tau\le k$ and $\gD \tM_k=0$. Then,
\begin{equation}\label{gDM}
  \gD \tM_k = \gD M_k
= \frac{\Se_k+\gD \Se_k}{S_{k}+\gD S_{k}} - \frac{\Se_k}{S_k}
= \frac{S_k\gD \Se_k-\Se_k\gD S_k}{S_k(S_{k}+\gD S_k)}.
\end{equation}
If a new vertex of degree $d$ is infected at time $k+1$, then $\gD\Se_k$ equals either $0$ or $\gD S_k=d-2$. In either case, \eqref{gDM} implies that 
$$
| \gD \tM_k|
\le
\frac{d-2}{S_{k}+d-2}
\le  \frac{d}{S_{k}+d}
\le  \frac{d}{c{k}+d}
\le  C \frac{d}{k+d}.
$$
If no new vertex is infected at time $k+1$, then $\gD S_k=-2$ and
\eqref{gDM} yields (for large $k$)
$$
| \gD\tM_k|
\le
\frac{2}{S_{k}-2}
\le  \frac{2}{c{k}-2}
\le  \frac{C}{k}.
$$
Hence, if $d\kk$ is the degree of the vertex infected at time $k+1$, with $d\kk=0$ if there is no such vertex, then
\begin{equation}\label{ele}
  \E\left[\sum_{k=\nu_n}^{(1-\eps)N} |\gD \tM_k|^2\right]
\le
C \E\left[\sum_{k=\nu_n}^{(1-\eps)N} \Bigpar{\frac{d\kk}{k+d\kk}}^2\right]
+ C\sum_{k=\nu_n}^\infty \frac{1}{k^2}.
\end{equation}

After step $k$, there are $2(N-k)$ free half-edges and hence, for each
vertex $i$ and step $k\le(1-\eps)N$, the probability that $i$ is infected in
step $k+1$, given that it has not been infected earlier, equals
$d_i/(2(N-k)-1)\leq Cd_i/n$. Hence, for any $k\le(1-\eps)N$,
\begin{equation}\label{win}
\E 
\left[\Bigpar{\frac{d\kk}{k+d\kk}}^2\right]
\le C 
\sumin \frac{d_i}{n}\Bigpar{\frac{d_i}{k+d_i}}^2
= C \frac{1}n\sumin \frac{d_i^3}{(k+d_i)^2}
=C 
\E\left[\frac{D_n^3}{(k+D_n)^2}\right].
\end{equation}
For any $d\ge1$, we have the estimates
$$
\sumk \frac{d^3}{(k+d)^2}
\le \sum_{k=1}^d \frac{d^3}{d^2} + \sum_{k=d+1}^\infty\frac{d^3}{k^2}
\le d^2 + \frac{d^3}d=2d^2
$$
and
$$
\sum_{k=\nu_n}^\infty \frac{d^3}{(k+d)^2}
\le \sum_{k=\nu_n}^\infty\frac{d^3}{(k+1)^2}
\le \frac{d^3}{\nu_n}.
$$
Hence,
\begin{equation}\label{auf}
\sum_{k=\nu_n}^\infty  \frac{D_n^3}{(k+D_n)^2}
\le {2D_n^2\land\frac{D_n^3}{\nu_n}}.
\end{equation}
By assumption, $D_n\dto D$ and $\nu_n\to\infty$, and thus $2D_n^2\land\nu_n\qw{D_n^3} \le \nu_n\qw D_n^3\pto 0$.
Furthermore, $D_n^2$ is uniformly integrable, and thus so is $2D_n^2\land\nu_n\qw{D_n^3}$. Consequently, we have by \eqref{auf} that
\begin{equation}\label{sof}
\E\left[\sum_{k=\nu_n}^\infty  \frac{D_n^3}{(k+D_n)^2}\right]
\le
\E\left[2D_n^2\land\frac{D_n^3}{\nu_n}\right]\to0.
\end{equation}
The proposition now follows from \eqref{ele}, \eqref{win} and \eqref{sof}.
\end{proof}

\begin{proof}[Proof of Proposition \ref{prop:main}]
Since $\tM_k-\tM_{\nu_n}$, with $k\ge\nu_n$, is a martingale, Doob's inequality and \refL{LQ} yield
\begin{equation*}
\E\left[\sup_{\nu_n\le k\le (1-\eps)N}\bigabs{\tM_k-\tM_{\nu_n}}^2\right]
\le 4 \E\left[\bigabs{\tM_{\floor{(1-\eps)N}}-\tM_{\nu_n}}^2\right]
=4 \E\left[\sum_{k=\nu_n}^{\floor{(1-\eps)N}-1} |\gD \tM_k|^2\right]\to0.
  \end{equation*}
Hence,  $ \sup_{\nu_n\le k\le (1-\eps)N}\bigabs{\tM_k-\tM_{\nu_n}} \pto 0$, and \eqref{lx1} follows since by \refL{LC}, \whp{} $\tau>(1-\eps)N$ and thus $M_k=\tM_k$ for $k\le(1-\eps)N$.
\end{proof}

\section{Proof of Theorem \ref{th:main}}

We can now prove Theorem \ref{th:main} by combining Proposition \ref{prop:initial} and Proposition \ref{prop:main}.

\begin{proof}
First assume that $\lambda_1=\lambda_2$. Fix $\varepsilon>0$ and let the
sequences $\nu_n$ and $t_n$ be as in Propositions \ref{prop:initial} and
\ref{prop:main}. Recall from the paragraph preceding \eqref{nun} that
$\cN_t$ denotes the number of steps (pairings of half-edges) that have been
performed at time $t$ in the continuous time exploration process. By
definition, we have that $\cM_{t_n}=M_{\cN_{t_n}}$ and, by \eqref{nun}, that
$\cN_{t_n}\geq \nu_n$ w.h.p. Hence, by Proposition \ref{prop:main},
$$
\sup_{\nu_n\le k\le (1-\eps)N}\bigabs{M_k-\cM_{t_n}} \pto 0.
$$
Furthermore, by Proposition \ref{prop:initial}, the fraction $\cM_{t_n}$ converges in distribution to a continuous random variable with support on $(0,1)$. Since a vertex that is infected in step $k+1$ in the discrete time exploration process is infected by type 1 independently with probability $M_k$, it follows from the law of large numbers that the fraction of type 1 vertices among all vertices that are infected at steps $k\in[\nu_n,(1-\eps)N]$ converges in distribution to $V$.

Recall from \eqref{nun} that $\nu_n\leq n^{1/3}$ by definition. Hence the
number of vertices that are infected before step $\nu_n$ does not exceed
$n^{1/3}$. The number of vertices that are infected after step $(1-\eps)N$
w.h.p.\ does not exceed $\eps (\E[D]+1)n$, since $N\leq (\E[D]+1)n$
w.h.p. The asymptotic fraction of vertices infected for
$k\in[\nu_n,(1-\eps)N]$ is hence at least $1-C\eps$. Since $\eps>0$ is
arbitrary, part (a) of the theorem follows.

To prove part (b), assume that $\lambda_1<\lambda_2$ and consider a modified version of the process where, after time $t_n$, the weaker type 1 infection spread with the same larger intensity $\lambda_2$ as the type 2 infection. To generate this process, we independently equip each half-edge $h$ with two independent Poisson processes $\cP^{\sss (1)}_h$ and $\cP^{\sss (2)}_h$, both with rate $\lambda_2$, and let $\check{\cP}^{\sss(1)}_h$ denote a thinned version of $\cP^{\sss (1)}_h$ where each point is kept with probability $\lambda_1/\lambda_2$, so that $\check{\cP}^{\sss (1)}_h$ is a Poisson process with rate $\lambda_1$. The process is then generated by letting the possible infection times for an active type 1 or 2 half-edge $h$ be specified by $\check{\cP}^{\sss (1)}_h$ and $\cP^{\sss (2)}_h$, respectively, up until time $t_n$, and by $\cP^{\sss (1)}_h$ and $\cP^{\sss (2)}_h$ after that time. The original process can be generated by using the thinned process $\check{\cP}^{\sss (1)}_h$ for type 1 throughout the whole time course. The corresponding discrete time processes are defined by observing the continuous time processes at the times of pairings.

Let $\check{\cS}^{\sss(i)}_t$ denote the number of active type $i$
half-edges at time $t$ in the modified process, and similarly for other
quantities. The above construction provides a coupling of the original
process and the modified process where
$\check{\cS}^{\sss(i)}_t=\cS^{\sss(i)}_t$ for $t\leq t_n$ and $i=1,2$, while
$\check{\cS}^{\sss(1)}_t\geq \cS^{\sss(1)}_t$ and
$\check{\cS}^{\sss(2)}_t\leq \cS^{\sss(2)}_t$ for $t>t_n$. It follows that
$\check{\cM}_t=\cM_t$ for $t\leq t_n$ and $\check{\cM}_t\geq \cM_t$ for
$t>t_n$.
Analogously, if $\cV^{\sss(i)}_i$ denotes the set of infected vertices of type $i$ at
time $t$, we have that $\check{\cV}^{\sss(1)}_t\supseteq \cV^{\sss(1)}_t$ and
$\check{\cV}^{\sss(2)}_t\subseteq \cV^{\sss(2)}_t$ for all $t$. Hence the number of type 1 infected vertices is at least as large in the modified process as in the original process, and it will suffice to show that the fraction of type 1 infected vertices in the
modified process converges to 0.

The modified process has equal intensities for the infection types after time $t_n$, that is, after step $\cN_{t_n}$ in the discrete time process. By \eqref{nun}, we have $\cN_{t_n}\geq \nu_n$ \whp\, and it then follows from Proposition \ref{prop:main} that
$$
\sup_{\cN_{t_n}\le k\le (1-\eps)N}\bigabs{\check{M}_k-\check{\cM}_{t_n}} \pto 0.
$$
Up to time $t_n$, on the other hand, type 1 spreads with a strictly smaller intensity and thus, by Proposition \ref{prop:initial}, the fraction $\check{\cM}_{t_n}$ converges to 0 in probability. By the same arguments as in the proof of part (a), this yields that the fraction of type 1 infected vertices in the modified process converges to 0, as desired.
\end{proof}

\section{Further work}

We have studied competing first passage percolation on the configuration model with finite variance degrees and exponential edge weights, and shown that both infection types occupy positive fractions of the vertex set if and only if they spread with the same intensity. There are several natural extensions of this work. One would be to investigate the scaling of the number of vertices of the losing type when the intensities are different. The results in \cite{regular} contain results in this direction for random regular graph and we conjecture that the results would be similar for finite variance graphs. Specifically we conjecture that, when $\lambda_1<\lambda_2$, the number of vertices occupied by type 1 is of the order $n^{\lambda_1/\lambda_2}$. In contrast to the case when the degree variance is infinite, treated in \cite{winner}, the winner hence does not take it all, but the loosing type also grows to infinity with $n$.

In \cite{regular}, also more general initial conditions are considered, where the initial number of one or both types may grow with $n$. This could also be done in our case and, in addition, one could consider initial sets where the vertices are chosen based on degree. Is it for instance possible for a weaker type to capture a positive fraction of the vertices if it can start from one or more high degree vertices, while the stronger type starts from a vertex with small degree?

Another extension would be to study more general passage time distributions, possibly different for the two types. Also in the general case, the initial growth of the types can be approximated by branching processes, but these are then not Markovian. A reasonable guess is that the possibility for both types to occupy positive fractions of the vertex set is determined by the relation between the Malthusian parameters of these branching processes, as discussed in \cite{fixspeedI}.

\end{document}